\documentclass[a4paper, 12pt]{amsart}
\usepackage{mathrsfs}
\usepackage{amsmath}
\usepackage{amssymb}
\usepackage{verbatim}
\usepackage{xcolor}
\usepackage{CJK}
\usepackage{graphicx}
\usepackage[all]{xy}
\usepackage{appendix}
\DeclareGraphicsExtensions{.eps,.ps,.jpg,.bmp}

\theoremstyle{plain}

\newtheorem{theorem}{Theorem}[section]
\newtheorem{proposition}{Proposition}[section]
\newtheorem{corollary}{Corollary}[section]

\newtheorem{lemma}{Lemma}[section]

\newtheorem{problem}{Problem}[section]
\newtheorem{example}{Example}[section]

\theoremstyle{definition}
\newtheorem{definition}{Definition}[section]

\theoremstyle{remark}

\newtheorem*{acknowledgment}{Acknowledgment}


\title{An alternative approach on the singular metric on a vector bundle}
\author{Jingcao Wu}

\begin{document}
\pagestyle{plain}
\begin{abstract}
In this paper, we will provide an alternative definition for the singular Hermitian metric on a vector bundle. Moreover, we discuss the Griffiths and Nakano positivities under this circumstance and prove a generalised Griffiths' vanishing theorem.
\end{abstract}
\maketitle

\section{Introduction}
The notion of the singular Hermitian metric was first developed on a holomorphic line bundle, and it turns out to be a powerful tool in complex geometry. There are fruitful work in these aspects, such as \cite{Dem92a,Dem92b,Dem12,DPS01,Nad90,Siu98}. It is natural to consider the singular metric on a vector bundle of higher rank. The pioneering work dates back to \cite{Cat98}. It first introduced the singular metric on a vector bundle, and used it to treat the global generation problems. We call it Cataldo's singular metric in this paper for simplicity. After that, a different definition for the singular metric was given in \cite{BP08,Rau15}. This type of singular metric was also widely studied in recent years, and we will call it Raufi's singular metric.

The most important problem when concerning a singular metric on a vector bundle is how to deal with the associated curvature. Different from the situation on a line bundle, the example (Theorem 1.5) in \cite{Rau15} shows that the curvature associated with a singular metric on a vector bundle may not be a current with measure coefficients.

Therefore it will cause problems when talking about the positivity such as the Griffiths and Nakano positivities. In traditional Hermitian geometry, the definitions of these two positivities are deeply relied on the curvature \cite{Gri69}. In \cite{BP08,Rau15}, a clever way is used to approach the Griffiths and Nakano semi-positivity without involving curvature. However, it seems hard to reapply this strategy when dealing with the strict Griffiths positivity or Nakano positivity. There are several papers in this aspect, such as \cite{Ina20b,Rau15}, in which these notions are defined under certain restrictions.

The main goal of this paper is to discuss the positivity of a vector bundle with singular metrics. In particular, we will define the (strict) Griffiths positivity and Nakano positivity in the most general context. In order to do this, we first provide an alternative way to describe the singular metric. More precisely, for a given vector bundle $E$ of rank $r+1$ over a complex manifold $Y$, we consider its projectivsed bundle $X:=\mathbb{P}(E^{\ast})$ as well as the tautological line bundle $\mathcal{O}_{E}(1):=\mathcal{O}_{X}(1)$. Let $\pi:X\rightarrow Y$ be the natural projection.

A basic observation in Finsler geometry says that a complex Finsler metric $G$ on $E$, which we will explain later, is one-one corresponds to a smooth Hermitian metric $\varphi$ on $\mathcal{O}_{E}(1)$. The explicit correspondence is presented in Sect.2. On the other hand, we can canonically define an $L^{2}$-metric $H_{\varphi}$ on $E$ through $\varphi$. In particular, $H_{\varphi}=G$ up to a constant if and only if $G$ is itself a Hermitian metric \cite{LSY06}.

So it is reasonable to define the singular Hermitian metric on $E$ via the singular metric on $\mathcal{O}_{E}(1)$. Throughout this paper, we always consider those locally $L^{1}$-bounded functions $\varphi$ on $X$ such that
\[
Y_{\varphi}:=\{z\in Y;\varphi|_{X_{z}}\textrm{ is well-defined}\}
\]
is a dense subset of $Y$. Now we fix a smooth Hermitian metric $h_{0}$ on $\mathcal{O}_{E}(1)$ as the reference metric. Let
\[
\begin{split}
\mathcal{H}(X):=&\{\varphi\in L^{1}_{\textrm{loc}}(X);\textrm{on each }X_{z}\textrm{ with }z\in Y_{\varphi}, \varphi|_{X_{z}}\textrm{ is smooth and }\\
&(i\Theta_{\mathcal{O}_{X}(1),h_{0}}+i\partial\bar{\partial}\varphi)|_{X_{z}}\textrm{ is strictly positive}\}.
\end{split}
\]
Thus by definition, if $\varphi\in\mathcal{H}(X)$ and $z\in Y_{\varphi}$,
\[
\omega_{\varphi,z}:=(i\Theta_{\mathcal{O}_{X}(1),h_{0}}+i\partial\bar{\partial}\varphi)|_{X_{z}}
\]
is actually a K\"{a}her metric on $X_{z}$. Let $u$ be a (local) section of $E$, then we have the following definition.
\begin{definition}\label{d11}
A singular Hermitian metric on $E$ is a map $H_{\varphi}$ with form that
\[
H_{\varphi}(u,u)=\int_{X_{z}}|u|^{2}_{h_{0}}e^{-\varphi}\frac{\omega^{r}_{\varphi,z}}{r!},
\]
where $\varphi\in\mathcal{H}(X)$. Here we use the fact that $\pi_{\ast}\mathcal{O}_{E}(1)=E$.
\end{definition}
Obviously, $H_{\varphi}(u,u)$ is finite on $Y_{\varphi}$. We remark here that $H_{\varphi}(u,u)$ may not be locally integrable since $\varphi\in L^{1}_{\textrm{loc}}$ won't guarantee that $e^{-\varphi}\in L^{1}_{\textrm{loc}}$. One could easily find such a counterexample. However, if those unmeasurable ones are excluded, we will return back to Ruafi's singular metric.

More specifically, let $M(E)$ be the set of all the smooth Hermitian metrics on $E$, let $M_{R}(E)$ be the set of Raufi's singular Hermitian metrics, and let $M_{W}(E)$ be the set of all the singular Hermitian metrics in Definition \ref{d11}. We will show that
\begin{theorem}\label{t1}
Let $Y$ be a complex manifold, and let $E$ be a holomorphic vector bundle over $Y$. Then we have
\begin{enumerate}
  \item $M(E)\subset M_{W}(E)$.
  \item When $\textrm{rank} E=1$, $M_{W}(E)$ equals to the set of all the singular metrics on a line bundle defined in \cite{Dem12}.
  \item Let $M^{o}_{R}(E)$ be the set of Raufi's singular Hermitian metrics $H$, such that $H$ is non-degenerated (as a Hermitian form) on each fibre. Let $M^{o}_{W}(E)$ be the set of all the singular Hermitian metrics $H$ defined above, such that $H$ is measurable. Then $M^{o}_{W}(E)=M^{o}_{R}(E)$.
\end{enumerate}
\end{theorem}
Therefore our definition for the singular metric is a reformulation of Raufi's. This reformulation is necessary and plays a crucial role in the later part of our work.

In the rest part, $Y$ is moreover assumed to be compact. We are ready to talk about the positivity involving the singular metric on a vector bundle $E$. Fix the notations as before. Observe that for a given singular metric $H_{\varphi}\in M_{W}(E)$, it naturally induces a (singular) metric $\psi$ on $\mathcal{O}_{E}(1)$. One should pay attention that $\psi$ is not necessary equal to $\varphi$, but we surely have that $H_{\varphi}=H_{\psi}$ up to a constant due to Theorem \ref{t1}. Thus, we may assume that the corresponding metric on $\mathcal{O}_{E}(1)$ of $H_{\varphi}$ is $\varphi$ itself for the time being. Then we have the following definition.
\begin{definition}\label{d12}
Let $(E,H_{\varphi})$ be a (singular) Hermitian vector bundle over $Y$. We say $(E,H_{\varphi})$ is positive in the sense of Griffiths, if
\[
i\Theta_{\mathcal{O}_{E}(1),\varphi}\geqslant0
\]
in the sense of current. $(E,H_{\varphi})$ is strictly positive in the sense of Griffiths, if
\[
i\Theta_{\mathcal{O}_{E}(1),\varphi}\geqslant\omega
\]
for a Hermitian metric $\omega$ on $X$.
\end{definition}
Notice that $\mathcal{O}_{E}(1)$ is a line bundle, and the curvature current associated with $\varphi$ is well-understood. We will show (in Theorem \ref{t3}) that this definition is compatible with the definition of Griffiths positivity in \cite{Gri69} for a smooth Hermitian metric. Moreover, Theorem \ref{t3} together with the discussions in \cite{PTa18} implies that if $(E,H_{\varphi})$ is positive in the sense of Griffiths, $H_{\varphi}$ must be measurable.

However, before proceeding to Theorem \ref{t3}, we would like to present an equivalent description for the Griffiths positivity, which is of independent interest. Let
\[
\mathcal{H}_{i}(X):=\{\varphi\in\mathcal{H}(X);\varphi\textrm{ is induced by a singular metric }H\in M_{W}(E)\}.
\]
Then we have
\begin{theorem}\label{t2}
Consider the $\varphi\in\mathcal{H}(X)$ such that
\[
i\Theta_{\mathcal{O}_{E}(1),h_{0}}+i\partial\bar{\partial}\varphi\geqslant0.
\]
The following two statements are equivalent.
\begin{enumerate}
  \item[(a)] $\varphi\in\mathcal{H}_{i}(X)$
  \item[(b)] $\varphi$ induces an isometry between the canonical isomorphism
  \begin{equation}\label{e11}
  K_{X/Y}\simeq\mathcal{O}_{X}(-r-1)+\pi^{\ast}\det E.
  \end{equation}
\end{enumerate}
\end{theorem}
We will make an explicit explanation of this isometry in the Sect.4. A result similar with Theorem \ref{t2} appears in \cite{Nau17}, and our theorem benefits a lot from it.

Remember that by definition, a singular metric on $E$ is given by a function $\varphi$ in $\mathcal{H}(X)$. However, it is not a one-one correspondence between $\mathcal{H}(X)$ and $M_{W}(E)$.  Theorem \ref{t2} actually gives a subset of $\mathcal{H}(X)$ that bijectively maps to $M_{W}(E)$. Indeed let $\mathcal{H}_{h_{0}}(X)$ be the set of all the functions $\varphi\in\mathcal{H}(X)$ with the following properties:
\begin{enumerate}
  \item[(i)] $i\Theta_{\mathcal{O}_{E}(1),h_{0}}+i\partial\bar{\partial}\varphi\geqslant0$ in the sense of current;
  \item[(ii)] For $z\in Y_{\varphi}$,  $\varphi$ induces an isometry between
\[
K_{X/Y}\simeq\mathcal{O}_{X}(-r-1)+\pi^{\ast}\det E.
\]
\end{enumerate}

Then as a consequence of Theorem \ref{t2}, we have
\begin{corollary}\label{c11}
Let $E$ be a vector bundle. Then there is a one-one correspondence between the set of the (singular) Hermitian metrics $H$ on $E$ such that $(E,H)$ is Griffiths positive in the sense of Definition \ref{d12} and $\mathcal{H}_{h_{0}}(X)$.
\end{corollary}
From now on, for an arbitrary $H_{\varphi}\in H_{W}(E)$ we can always assume that the corresponding metric on $\mathcal{O}_{E}(1)$ is $\varphi$ again without loss of generality. In particular, $(E,H_{\varphi})$ is (strictly) positive in the sense of Griffiths if and only if $(\mathcal{O}_{E}(1),\varphi)$ is (big) pseudo-effective.

Next we discuss the Nakano positivity. Remember in Definition 1.8 of \cite{Rau15}, there has already been a clever approach. Unfortunately, $(E,H_{\varphi})$ is a priori supposed to be positive in the sense of Griffiths there. We provide an alternative notion called strong Nakano positivity here. In particular, we do not need to assume that $E$ is positive in the sense of Griffiths at the beginning.

To begin with, we generalise Definition \ref{d12} to a $\mathbb{Q}$-twisted vector bundle $E<\delta>$ with $\delta$ a $\mathbb{Q}$-divisor on $Y$. The definition for a $\mathbb{Q}$-twisted vector bundle will be given in Sect.2. Now let $(E,H)$ be a (singular) Hermitian vector bundle of rank $r+1$, and let
\[
\delta=q\det E
\]
be a $\mathbb{Q}$-divisor with $q\in\mathbb{Q}$. Let $\varphi$ be the metric on $\mathcal{O}_{E}(1)$ corresponding to $H$. We say
\[
(E<q\det E>,H)
\]
is (strictly) positive in the sense of Griffiths if
\[
c_{1}(\mathcal{O}_{E}(1),\varphi)+q\pi^{\ast}c_{1}(\det E,\det H)
\]
is (strictly) positive in the sense of current. Remember that
\[
 \pi:X=\mathbb{P}(E^{\ast})\rightarrow Y
\]
is the projection.

Then the strong Nakano positivity is defined as
\begin{definition}\label{d13}
Let $(E,H)$ be a (singular) Hermitian vector bundle. Then it is (strictly) strongly positive in the sense of Nakano if the $\mathbb{Q}$-twisted vector bundle
\[
(E<-\frac{1}{r+2}\det E>,H)
\]
is (strictly) positive in the sense of Griffiths.

Let $q$ be an arbitrary rational number. Then
\[
(E<q\det E>,H)
\]
is (strictly) strongly positive in the sense of Nakano if $\mathbb{Q}$-twisted vector bundle
\[
(E<\frac{2q-1}{r+2}\det E>,H)
\]
is (strictly) positive in the sense of Griffiths.
\end{definition}

In the available literature, such as \cite{Ina20c,Rau15}, before talking about the Nakano positivity, $(E,H)$ should a priori be Griffiths positive. That is one motivation for us to consider this strong Nakano positivity, which is independently defined. We summarise the discussions about the Griffiths and Nakano positivities as the following theorem.
\begin{theorem}\label{t3}
Let $Y$ be a compact complex manifold, and let $E$ be a vector bundle over $Y$ endowed with a (singular) Hermitian metric $H$. Then
\begin{enumerate}
  \item When $H$ is smooth, the notion of Griffiths  positivity here coincides with the definition in \cite{Gri69}. The strong Nakano positivity will implies the Nakano positivity in the usual sense.
  \item $(E,H)$ is positive (negative) in the sense of Griffiths if and only if it is positively (negatively) curved in the sense of Definition 1.2 in \cite{Rau15}.
  \item If $(E,H)$ is (strictly) positive in the sense of Griffiths, then
  \[
  (\det E,\det H)
  \]
  is (big) pseudo-effective.
  \item If $(E,H)$ is strongly positive in the sense of Nakano, then it is positive in the sense of Griffiths.
  \item $(E,H)$ is positive in the sense of Griffiths if and only if
\[
(E<\frac{1}{2}\det E>,H)
\]
is strongly positive in the sense of Nakano.
  \item If $(E,H)$ is strongly negative in the sense of Nakano, then it is negatively curved in the sense of Nakano as in Definition 1.8 of \cite{Rau15}.
\end{enumerate}
\end{theorem}
We have seen in Theorem \ref{t3} that the strong Nakano positivity implies the Nakano positivity, but we are not sure whether the opposite direction holds. Finally we generalise Griffiths' vanishing theorem in \cite{Gri69}.
\begin{theorem}\label{t4}
Let $Y$ be a compact K\"{a}hler manifold. Let $(E,H)$ be a (singular) Hermitian vector bundle of rank $r+1$ and let $L$ be a line bundle over $Y$. Then we have the following results.
\begin{enumerate}
  \item Suppose that $(E,H)$ is strictly Griffiths positive, and $L$ is nef. Let $\varphi$ be the corresponding metric on $\mathcal{O}_{E}(1)$. For any $k\geqslant0$, if
      \[\mathscr{I}((r+k+1)\varphi)=\mathcal{O}_{X},
      \]
then
  \[
  H^{q}(Y,K_{Y}\otimes S^{k}E\otimes\det E\otimes L)=0\textrm{ for }q>0.
  \]
  \item Suppose that $(E,H)$ is Griffihts positive, and $(L,h)$ is big. Let $\varphi$ be the corresponding metric on $\mathcal{O}_{E}(1)$. Assume that $\nu(\varphi)=0$. Then
  \[
  H^{q}(Y,K_{Y}\otimes S^{k}E\otimes\det E\otimes L\otimes\mathscr{I}(h))=0
  \]
  for $q>0$ and $k\geqslant0$.
  \item Suppose that $(E,H)$ is strictly strongly Nakano positive, and $L$ is nef. Let $\varphi$ be the corresponding metric on $\mathcal{O}_{E}(1)$. Assume that $\nu(\varphi)=0$.
Then
  \[
  H^{q}(Y,K_{Y}\otimes S^{k}E\otimes L\otimes\mathscr{I}(\det H))=0
  \]
for $q>0$ and $k\geqslant1$.
  \item Suppose that $(E,H)$ is strongly Nakano positive, and $(L,h)$ is big. Let $\varphi$ be the corresponding metric on $\mathcal{O}_{E}(1)$. Assume that $\nu(\varphi)=0$. Then
  \[
  H^{q}(Y,K_{Y}\otimes S^{k}E\otimes L\otimes\mathscr{I}(\det H\otimes h))=0
  \]
  for $q>0$ and $k\geqslant0$.
\end{enumerate}
\end{theorem}
Here $\mathscr{I}(\varphi),\mathscr{I}(h)$ and $\mathscr{I}(\det H)$ refer to the multiplier ideal sheaves. In \cite{Ina20b}, there is also a generalisation of Griffiths' vanishing theorem (Corollary 1.4). We will use an example in the end of this paper to show the partial relationship between his result and Theorem \ref{t4} here.

\begin{acknowledgment}
The author wants to thank Prof. Jixiang Fu, who brought this problem to his attention and for numerous discussions directly related to this work.
\end{acknowledgment}

\section{Preliminary}
\subsection{Setup}
Let $Y$ be a complex manifold of dimension $n$, and let $f:E\rightarrow Y$ be a holomorphic vector bundle of rank $r+1$ over $Y$. Let $z=(z_{1},...,z_{n})$ be a system of local coordinate on $Y$, and let
\[
Z=(Z_{0},...,Z_{r})
\]
be the fibre coordinate defined by a local holomorphic frame $\{u_{0},...,u_{r}\}$ of $E$. $X$ is defined as the total space of the projectivised bundle $\mathbb{P}(E^{\ast})$. Let $(z,w=(w_{1},...,w_{r}))$ be a system of local coordinate on $X$, and let $\pi:X\rightarrow Y$ be the projection. Let $\mathcal{O}_{E}(1):=\mathcal{O}_{X}(1)$ be the tautological line bundle.

Throughout this paper, by saying an $L^{1}_{\textrm{loc}}$-bounded function $\varphi$ on $X$, we always refer to those $\varphi$ whose singular part doesn't dominate $Y$. In other word, the closure of $\{z\in Y;\varphi|_{X_{z}}=\infty\}$ is a proper subset of $Y$.
\subsection{Hermitian geometry revisit}
Let's briefly recall the canonical Hermitian geometry first. This part is mostly taken from \cite{Gri69,Kob87}. Let $H$ be a smooth Hermitian metric on $E$, we write
\[
H_{i}=\frac{\partial H}{\partial z_{i}}, H_{\bar{j}}=\frac{\partial H}{\partial\bar{z}_{j}}, H_{\alpha}=\frac{\partial H}{\partial Z_{\alpha}}, H_{\bar{\beta}}=\frac{\partial H}{\partial\bar{Z}_{\beta}}
\]
to denote the derivative with respect to
\[
z_{i},\bar{z}_{j} (1\leqslant i,j\leqslant n)\textrm{ and } Z_{\alpha},\bar{Z}_{\beta}  (0\leqslant\alpha,\beta\leqslant r).
\]
The higher order derivative is similar. Then locally $H$ is represented as the matrix
\[
(H_{\alpha\bar{\beta}}(z)).
\]
Since $(H_{\alpha\bar{\beta}})$ is positive-definite, it is invertible. The inverse matrix is denoted by $(H^{\gamma\bar{\delta}})$. In this context, the associated curvature is represented as
\[
\Theta_{E,H}=\sum\Theta_{\alpha\bar{\beta}i\bar{j}}dz^{i}\wedge d\bar{z}^{j}\otimes Z_{\alpha}\otimes Z^{\ast}_{\beta}
\]
with
\[
\Theta_{\alpha\bar{\beta}i\bar{j}}=-H_{\alpha\bar{\beta}i\bar{j}}+H^{\gamma\bar{\delta}}H_{\alpha\bar{\delta}i}H_{\gamma\bar{\beta}\bar{j}}.
\]
Now fix a point $z_{0}\in Y$, we can always assume that $\{u_{0},...,u_{r}\}$ is an orthonormal basis with respect to $H$ at $z_{0}$. The Griffiths and Nakano positivities \cite{Gri69} is defined as follows:
\begin{definition}\label{d21}
Keep notations before,
\begin{enumerate}
  \item $E$ is called (strictly) positive in the sense of Griffiths at $z_{0}$, if for any complex vector $z=(z_{1},...,z_{n})$ and section $u=\sum Z_{\alpha}u_{\alpha}$ of $E$,
  \[
  \sum i\Theta_{\alpha\bar{\beta}i\bar{j}}Z_{\alpha}\bar{Z}_{\beta}z_{i}\bar{z}_{j}
  \]
  is (strictly) positive.
  \item $E$ is called (strictly) positive in the sense of Nakano at $z_{0}$, if for any $n$-tuple $(u^{1}=\sum Z^{1}_{\alpha}u_{\alpha},...,u^{n}=\sum Z^{n}_{\alpha}u_{\alpha})$ of sections of $E$,
  \[
  \sum i\Theta_{\alpha\bar{\beta}i\bar{j}}Z^{i}_{\alpha}\bar{Z}^{j}_{\beta}
  \]
  is (strictly) positive.
\end{enumerate}
\end{definition}

Let $u$ be a holomorphic section of $E$. A direct computation implies the following well-known formula in \cite{Gri69}
\begin{equation}\label{e21}
i\partial\bar{\partial}\log H(u)=-i\frac{\langle\Theta u,u\rangle}{H(u)}+i\frac{\langle D^{1,0}u,D^{1,0}u\rangle H(u)-\langle u,D^{1,0}u\rangle\langle D^{1,0}u,u\rangle}{H(u)^{2}}.
\end{equation}
Note that the second term on the right hand side is positive by the Cauchy--Schwarz inequality. Then we have
\begin{lemma}\label{l21}
$(E,H)$ is negative in the sense of Griffiths if and only if $\log H(u)$ is plurisubharmonic for any holomorphic section $u$.
\end{lemma}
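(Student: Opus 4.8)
The plan is to read both implications off the pointwise identity (\ref{e21}), using that its second right-hand term is a non-negative $(1,1)$-form. Throughout, recall that a $(1,1)$-form $i\sum a_{i\bar j}\,dz^{i}\wedge d\bar{z}^{j}$ is $\geqslant 0$ precisely when $\sum a_{i\bar j}z_{i}\bar{z}_{j}\geqslant 0$ for all $z$, so Definition \ref{d21} says exactly that $(E,H)$ is negative in the sense of Griffiths iff $-i\langle\Theta s,s\rangle\geqslant 0$ as a $(1,1)$-form for every local holomorphic section $s$. Note also that plurisubharmonicity of $\log H(u)$ is a local condition, so we are free to work with local holomorphic sections on small coordinate patches.

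For the direction ``Griffiths negative $\Rightarrow$ $\log H(u)$ plurisubharmonic'': fix a local holomorphic section $u$. Since $u$ is holomorphic, formula (\ref{e21}) applies, and its Cauchy--Schwarz term $i\bigl(H(D^{1,0}u)H(u)-\langle u,D^{1,0}u\rangle\langle D^{1,0}u,u\rangle\bigr)/H(u)^{2}$ is a non-negative $(1,1)$-form at every point. By hypothesis $-i\langle\Theta u,u\rangle\geqslant 0$ as a $(1,1)$-form, so (\ref{e21}) gives $i\partial\bar{\partial}\log H(u)\geqslant 0$, i.e.\ $\log H(u)$ is plurisubharmonic.

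For the converse, I would localize and, at an arbitrary point $x$ and for an arbitrary vector $s_{0}\in E_{x}$, produce a local holomorphic section $u$ with $u(x)=s_{0}$ and $D^{1,0}u(x)=0$. Writing $u=\sum_{\alpha}f_{\alpha}(z)e_{\alpha}$ in a local holomorphic frame $\{e_{\alpha}\}$ near $x$, and letting $\theta=H^{-1}\partial H$ be the connection form, one simply takes each $f_{\alpha}$ to be the degree-one Taylor polynomial with $f_{\alpha}(x)=(s_{0})_{\alpha}$ and linear part prescribed by $\partial f_{\alpha}(x)=-(\theta\, s_{0})_{\alpha}$. This $u$ is holomorphic, so $\bar{\partial}u\equiv 0$, while $D^{1,0}u=\partial u+\theta u$ vanishes at $x$ by construction. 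Evaluating (\ref{e21}) at $x$, the Cauchy--Schwarz term drops out and we get $i\partial\bar{\partial}\log H(u)(x)=-i\langle\Theta s_{0},s_{0}\rangle(x)/H(s_{0})(x)$. The hypothesis that $\log H(u)$ is plurisubharmonic makes the left side a non-negative $(1,1)$-form at $x$, hence so is $-i\langle\Theta s_{0},s_{0}\rangle(x)$; as $x$ and $s_{0}$ are arbitrary, $(E,H)$ is negative in the sense of Griffiths.

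The only delicate point is the section-construction step in the converse: one must obtain a genuinely holomorphic local section (so that the $\bar{\partial}$-contribution is identically zero and formula (\ref{e21}) is valid) while simultaneously killing $D^{1,0}u$ at the one point $x$. Since both requirements only constrain the $0$-jet and $1$-jet of $u$ at $x$, a holomorphic polynomial section of degree one does the job, and no global existence result is needed. Everything else is a direct substitution into (\ref{e21}) together with the elementary equivalence between positivity of a $(1,1)$-form and positivity of the associated Hermitian quadratic form.
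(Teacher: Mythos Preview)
Your proposal is correct and follows exactly the route the paper intends: the paper does not spell out a proof of Lemma \ref{l21} beyond pointing to formula (\ref{e21}) and the Cauchy--Schwarz positivity of its second term, and your argument is the natural elaboration of that hint. The section-construction step you use for the converse is precisely the device the paper records immediately after the lemma (modifying a holomorphic section by a linear combination so that $D^{1,0}s=0$ at a point), so nothing in your write-up departs from the paper's approach.
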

One should pay attention that there is no equivalence between the strict Griffiths positivity of $(E,H)$ and the strict plurisubharmonicity of $\log H(u)$.

The famous Griffiths' vanishing theorem says that
\begin{proposition}[Griffiths' vanishing theorem, \cite{Gri69}]\label{p21}
Let $Y$ be a projective manifold, and let $E$ be a strictly Griffiths positive vector bundle over $Y$. Then
\[
H^{q}(Y,K_{Y}\otimes E\otimes\det E)=0\textrm{ for }q>0.
\]
\end{proposition}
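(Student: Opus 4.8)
The plan is to transport the statement to the projectivised bundle $X=\mathbb{P}(E^{\ast})$ and to apply Nadel's vanishing theorem there. First I would record a line bundle identity on $X$. Combining the canonical isomorphism $K_{X/Y}\simeq\mathcal{O}_{X}(-r-1)\otimes\pi^{\ast}\det E$ from (\ref{e11}) with $K_{X}=K_{X/Y}\otimes\pi^{\ast}K_{Y}$ gives
\[
K_{X}\otimes\mathcal{O}_{X}(r+2)\;\simeq\;\mathcal{O}_{X}(1)\otimes\pi^{\ast}(K_{Y}\otimes\det E).
\]
Since $\mathcal{O}_{X}(1)$ restricts to $\mathcal{O}_{\mathbb{P}^{r}}(1)$ on each fibre, one has $R^{i}\pi_{\ast}\mathcal{O}_{X}(1)=0$ for $i>0$ while $\pi_{\ast}\mathcal{O}_{X}(1)=E$; hence the Leray spectral sequence for $\mathcal{O}_{X}(1)\otimes\pi^{\ast}(K_{Y}\otimes\det E)$ degenerates and the projection formula yields
\[
H^{q}\bigl(X,K_{X}\otimes\mathcal{O}_{X}(r+2)\bigr)\;\simeq\;H^{q}(Y,K_{Y}\otimes E\otimes\det E)
\]
for all $q$. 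So it suffices to prove the vanishing on $X$. Note also that $X$ is compact K\"ahler, since $Y$ is and $\pi\colon X\to Y$ is a projective bundle.

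Next I would equip $\mathcal{O}_{X}(r+2)$ with a singular metric of strictly positive curvature. By hypothesis $(E,H)$ is strictly Griffiths positive, so by Theorem \ref{t3}(4) the pair $(\mathcal{O}_{E}(1),\varphi)$ is big; concretely, the induced singular metric $h_{H}=h_{0}e^{-\varphi}$ on $\mathcal{O}_{E}(1)=\mathcal{O}_{X}(1)$ has curvature current $i\Theta_{\mathcal{O}_{X}(1),h_{H}}\geqslant\varepsilon\,\omega$ for some K\"ahler form $\omega$ on $X$ and some $\varepsilon>0$. Taking the $(r+2)$-nd tensor power produces a singular metric $h_{H}^{\otimes(r+2)}$ on $\mathcal{O}_{X}(r+2)$ whose curvature current is $\geqslant(r+2)\varepsilon\,\omega>0$; moreover its local weights are $(r+2)\varphi$ up to a bounded smooth term, so its multiplier ideal sheaf is exactly $\mathscr{I}((r+2)\varphi)$, which is $\mathcal{O}_{X}$ by assumption.

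Now I would apply Nadel's vanishing theorem on the compact K\"ahler manifold $X$ to $\mathcal{O}_{X}(r+2)$ endowed with $h_{H}^{\otimes(r+2)}$: because the curvature dominates a K\"ahler form,
\[
H^{q}\bigl(X,K_{X}\otimes\mathcal{O}_{X}(r+2)\otimes\mathscr{I}((r+2)\varphi)\bigr)=0,\qquad q>0,
\]
and using $\mathscr{I}((r+2)\varphi)=\mathcal{O}_{X}$ this reads $H^{q}(X,K_{X}\otimes\mathcal{O}_{X}(r+2))=0$ for $q>0$. Substituting into the isomorphism of the first paragraph gives the claim.

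The step I expect to require the most care is the second paragraph: one must check that strict Griffiths positivity of the singular metric $(E,H)$ really yields a bona fide singular metric $h_{H}$ on $\mathcal{O}_{E}(1)$ whose curvature is a K\"ahler current on all of $X$ — not merely fibrewise — which is exactly where the hypothesis that $\varphi$ be well-defined along each fibre together with the bigness characterisation of Theorem \ref{t3}(4) enter, and that passing to the $(r+2)$-nd power produces precisely the multiplier ideal $\mathscr{I}((r+2)\varphi)$ appearing in the statement. The remaining ingredients — the projective bundle computation and Nadel vanishing — are standard.
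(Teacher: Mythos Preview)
You have essentially reproduced the paper's proof of Theorem~\ref{t4} (the \emph{singular} Griffiths vanishing theorem), not of Proposition~\ref{p21}. The paper's own proof of Proposition~\ref{p21} is the classical two-line argument: by Demailly--Skoda \cite{DSk79}, Griffiths positivity of $E$ implies Nakano positivity of $E\otimes\det E$, and then Nakano's vanishing theorem finishes. No passage to $X=\mathbb{P}(E^{\ast})$, no Nadel, no multiplier ideals.

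Your argument is not wrong as a route to the conclusion, but it is mis-attributed in two places. You write that $\mathscr{I}((r+2)\varphi)=\mathcal{O}_{X}$ holds ``by assumption'' and that ``$\varphi$ be well-defined along each fibre'' is a hypothesis; neither appears in the statement of Proposition~\ref{p21}. These are the extra hypotheses of Theorem~\ref{t4}. In the setting of Proposition~\ref{p21} the metric $H$ is smooth, so by Proposition~\ref{p23} the induced metric on $\mathcal{O}_{E}(1)$ is smooth with strictly positive curvature (ample, not merely big), and the multiplier ideal of $(r+2)\varphi$ is trivially $\mathcal{O}_{X}$. If you replace the two ``by assumption'' steps with this observation and invoke Kodaira (or Proposition~\ref{p23}) rather than Theorem~\ref{t3}(4), your projectivised-bundle argument becomes a correct, self-contained proof of Proposition~\ref{p21}. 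It is then a genuinely different route from the paper's Demailly--Skoda/Nakano proof: yours avoids the Nakano-positivity tensor trick at the cost of passing to $X$ and using the Leray spectral sequence, and it has the advantage of foreshadowing the singular generalisation in Theorem~\ref{t4}; the paper's proof stays entirely on $Y$ and is considerably shorter.
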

This theorem can be seen as a consequence of Nakano's vanishing theorem \cite{Nak55} in view of Demailly and Skoda's work \cite{DSk79}. This observation directly leads to our generalisation, i.e. Theorem \ref{t4}.
\begin{proof}[A quick proof of Proposition \ref{p21}]
Since $E$ is positive in the sense of Griffiths, $E\otimes\det E$ is positive in the sense of Nakano by \cite{DSk79}. Therefore
\[
H^{q}(Y,K_{Y}\otimes E\otimes\det E)=0\textrm{ for }q>0
\]
by Nakano's vanishing theorem.
\end{proof}

Proposition \ref{p21} was further developed by \cite{Man97} as
\begin{proposition}[Manivel's vanishing theorem]\label{p22}
Let $Y$ be a projective manifold. Let $E$ be a vector bundle of rank $r+1$ and let $L$ be a line bundle over $Y$. Suppose
that $E$ is ample and $L$ nef; or that $E$ is nef and $L$ ample. Then for any $k\geqslant 0$,
\[
H^{p,q}(Y, S^{k}E\otimes(\det E)^{n-p+1}\otimes L)=0\textrm{ for }p+q>n.
\]
\end{proposition}
One refers Definition \ref{d23} for the definition of an ample (nef) vector bundle. Theorem \ref{t4} also extends Proposition \ref{p22} (with $p=n$) to the singular case.

\subsection{Finsler geometry revisit}
We present some basic properties concerning the Finsler metric on a vector bundle here. This part is mainly based on \cite{Kob75,Kob96,Laz04}. Keep the same notations as above, we have
\begin{definition}\label{d22}
A Finsler metric $G$ on $E$ is a continuous function $G:E\rightarrow\mathbb{R}$ satisfying the following conditions:
\begin{enumerate}
  \item $G$ is smooth on $E^{0}:=E-\{0\}$, where $0$ denotes the zero section of $E$;
  \item $G(z,Z)\geqslant 0$ for all $(z,Z)\in E$ with $z\in Y$ and $Z\in f^{-1}(z)$, and $G(z,Z)=0$ if and only if $Z=0$;
  \item $G(z,\lambda Z)=|\lambda|^{2}G(z,Z)$ for all $\lambda\in\mathbb{C}$.
\end{enumerate}
In applications, one often requires that $G$ is strongly pseudo-convex, that is,

(4) the Levi form $i\partial\bar{\partial}G$ on $E^{0}$ is positive-definite along fibres $E_{z}$ for $z\in Y$.
\end{definition}

Now let $G$ be a strongly pseudo-convex Finsler metric. Locally $G$ can be rewritten as
\[
G=\sum iG_{\alpha\bar{\beta}}(z,Z)dZ_{\alpha}\wedge d\bar{Z}_{\beta}.
\]
Different from the Hermitian metric, $G_{\alpha\bar{\beta}}$ may depend on the variable $Z$. It is easy to verify that $G_{\alpha\bar{\beta}}(z,Z)$ is homogenous of degree $0$ with respect to $Z$. Since $G$ is strongly pseudo-convex, $\{G_{\alpha\bar{\beta}}(z,Z)\}$ is a positive-definite matrix on $E^{0}$, hence a smooth metric. It then defines a smooth Hermitian metric $g^{G}$ on $p:f^{\ast}E\rightarrow E^{0}$ by
\[
g^{G}(z,Z,W):=G\circ p(z,Z,W)=G(z,Z).
\]
Here $W$ is the coordinate of the fibre of $p$. As is pointed before, $G_{\alpha\bar{\beta}}$ is homogenous of degree $0$ with respect to $Z$, hence it lifts to a Hermitian metric on $f^{\ast}E\rightarrow\mathbb{P}(E)$ over the projectivised bundle, which is still denoted by $g^{G}$. Then, as a subbundle of $f^{\ast}E$, the tautological line bundle
\[
\mathcal{O}_{\mathbb{P}(E)}(-1):=\{(z,[Z],W);W=\lambda Z,\lambda\in\mathbb{C}\}
\]
inherits a Hermitian metric from $(f^{\ast}E,g^{G})$, which is denoted by $g$. One verifies easily that $g(z,[Z],\xi)=G(z,Z)$ for any point $(z,[Z],\xi)$ on $\mathcal{O}_{\mathbb{P}(E)}(-1)$. Let $\varphi$ be the weight function of $g$, formally we have
\[
\varphi=\log G.
\]
Obviously, this procedure is invertible. In summary, there is a one-one correspondence between the Finsler metrics on $E$ and the smooth Hermitian metrics on $\mathcal{O}_{\mathbb{P}(E)}(-1)$. Therefore we call both of them the Finsler metric on $E$, if nothing is confused.

One should pay attention that $\mathcal{O}_{\mathbb{P}(E)}(1)$, as the dual bundle of
\[
\mathcal{O}_{\mathbb{P}(E)}(-1)\rightarrow\mathbb{P}(E),
\]
and $\mathcal{O}_{E}(1)\rightarrow\mathbb{P}(E^{\ast})$ are totally different line bundles, and in the rest part we will mainly work on $\mathcal{O}_{E}(1)$. In other words, we are more interested in the Finsler metric on $E^{\ast}$. One reason is that $\mathcal{O}_{E}(1)$ is usually used in \cite{Laz04} to define the algebraic positivities of $E$.
\begin{definition}\label{d23}
Keep notations before,
\begin{enumerate}
  \item $E$ is called ample, if $\mathcal{O}_{E}(1)$ is ample.
  \item $E$ is called nef, if $\mathcal{O}_{E}(1)$ is nef.
  \item $E$ is called big, if $\mathcal{O}_{E}(1)$ is big.
\end{enumerate}
\end{definition}
One refers to Sect.2.4 or \cite{Dem12} for the concepts of an ample (nef, big) line bundle.

Next, we establish the relationship between these algebraic positivities and the curvature of $E$. Remember that a Finsler metric $G$ on $E^{\ast}$ (not $E$) induces a Hermitian metric $\varphi$ on $\mathcal{O}_{E}(1)$. We should present a precise computation for the curvature $i\Theta_{\mathcal{O}_{E}(1),\varphi}$ here. This part is mainly based on \cite{Kob75,Kob96}. We expand $i\partial\bar{\partial}\varphi$ on $X$ as follows:
\[
i\partial\bar{\partial}\varphi=\sum i(g_{i\bar{j}}dz_{i}\wedge d\bar{z}_{j}+g_{i\bar{\beta}}dz_{i}\wedge d\bar{w}_{\beta}+g_{\alpha\bar{j}}dw_{\alpha}\wedge d\bar{z}_{j}+g_{\alpha\bar{\beta}}dw_{\alpha}\wedge d\bar{w}_{\beta}).
\]
Since $G$ is strongly pseudo-convex, $\{G_{\alpha\bar{\beta}}\}$ as well as $\{(\log G)_{\alpha\bar{\beta}}\}$ is invertible. Let $\{G^{\bar{\beta}\alpha}\}$ and $\{(\log G)^{\bar{\beta}\alpha}\}$ be their inverse matrixes respectively. Then for the holomorphic vector field $\frac{\partial}{\partial z_{i}}$ on $Y$, its horizonal lift to $X$ is defined as
\[
\frac{\delta}{\delta z_{i}}:=\frac{\partial}{\partial z_{i}}-\sum_{\alpha,\beta}(\log G)^{\bar{\beta}\alpha}(\log G)_{\bar{\beta}i}\frac{\partial}{\partial w_{\alpha}}.
\]
The dual basis of $\{\frac{\delta}{\delta z_{i}},\frac{\partial}{\partial w_{\alpha}}\}$ will be
\[
\{dz_{i},\delta w_{\alpha}:=d w_{\alpha}+\sum_{i,\beta}(\log G)_{\bar{\beta}i}(\log G)^{\bar{\beta}\alpha}dz_{i}\}.
\]
Let
\[
\Psi:=\sum iK_{\alpha\bar{\beta}i\bar{j}}\frac{w_{\alpha}\bar{w}_{\beta}}{G}dz_{i}\wedge d\bar{z}_{j}
\]
and
\[
\omega_{FS}:=\sum i\frac{\partial^{2}\log G}{\partial w_{\alpha}\partial\bar{w}_{\beta}}\delta w_{\alpha}\wedge\delta\bar{w}_{\beta},
\]
where
\[
K_{\alpha\bar{\beta}i\bar{j}}:=-G_{\alpha\bar{\beta}i\bar{j}}+G^{\gamma\bar{\delta}}G_{\alpha\bar{\delta}i}G_{\gamma\bar{\beta}\bar{j}}.
\]
It is easy to verify that they are globally defined $(1,1)$-form on $X$. Then the celebrated theorem given by Kobayashi says that
\begin{proposition}[Kobayashi, \cite{Kob75,Kob96}]\label{p23}
\begin{equation}\label{e22}
i\partial\bar{\partial}\varphi=-\Psi+\omega_{FS}.
\end{equation}
\end{proposition}

Notice that the definition of $K_{\alpha\bar{\beta}i\bar{j}}$ is formally the same as the curvature $\Theta_{\alpha\bar{\beta}i\bar{j}}$ for a Hermitian metric in Sect.2.2. Indeed, it is easy to verify that if $G$ is itself a Hermitian metric, i.e. $G_{\alpha\bar{\beta}}$ is independent of $Z$, $K_{\alpha\bar{\beta}i\bar{j}}$ is just the curvature in the usual sense. The formula (\ref{e22}) is interpreted to the formula (\ref{e21}) at this time. So it is reasonable to consider $\Psi$ as the "curvature" associated with the Finsler metric $G$, which leads to the following two definitions.
\begin{definition}\label{d24}
The form $\Psi$ is called the Kobayashi curvature associated with $G$. $(E^{\ast},G)$ is called (strictly) positive in the sense of Kobayashi if the matrix $(K_{\alpha\bar{\beta}i\bar{j}}w_{\alpha}\bar{w}_{\beta})$ is (strictly) positive for any
\[
w=(w_{1},...,w_{r}).
\]
The Kobayashi negativity is similar.
\end{definition}
\begin{definition}[\cite{CTi08,Don01,Don05}]\label{d25}
\[
c(\varphi)_{i\bar{j}}:=<\frac{\delta}{\delta z_{i}},\frac{\delta}{\delta z_{j}}>_{i\partial\bar{\partial}\varphi}
\]
is called the geodesic curvature of $\varphi$ in the direction of $i,j$.
\end{definition}
Let $a^{\alpha}_{i}:=-\sum_{\beta}(\log G)^{\bar{\beta}\alpha}(\log G)_{\bar{\beta}i}$, we have
\[
c(\varphi)_{i\bar{j}}=(\log G)_{i\bar{j}}-\sum_{\alpha}a^{\alpha}_{i}a_{\alpha\bar{j}}.
\]
Therefore the relationship between $c(\varphi)_{i\bar{j}}$ and $\Psi$ will be
\begin{equation}\label{e23}
-\Psi=\sum ic(\varphi)_{i\bar{j}}dz_{i}\wedge d\bar{z}_{j}.
\end{equation}

The following result is a quick consequence of the formula (\ref{e22}).
\begin{proposition}[Kobayashi, \cite{Kob75,Kob96}]\label{p24}
$(\mathcal{O}_{E}(1),\varphi)$ is ample (hence $E$ is ample) if and only if $(E^{\ast},G)$ is strictly negative in the sense of Kobayashi. In particular, if $G$ is a Hermitian metric on $E^{\ast}$, it is also equivalent to say that $(E,G^{\ast})$ is strictly positive in the sense of Griffiths.
\end{proposition}

\subsection{Q-twisted bundle}
We recall the $\mathbb{Q}$-twisted bundle from \cite{Laz04} in the end of this section.
\begin{definition}[$\mathbb{Q}$-twisted bundles]\label{d26}
A $\mathbb{Q}$-twisted vector bundle
\[
E<\delta>
\]
over $Y$ is an ordered pair consisting of a vector bundle $E$ over $Y$, defined up to isomorphism, and a $\mathbb{Q}$-divisor $\delta$ on $Y$.
\end{definition}

The definition of positivity for a vector bundle extends to a $\mathbb{Q}$-twisted bundle. Before introducing this, we take the chance to recall the positivity concerning a $\mathbb{Q}$-divisor in \cite{Dem12} first. Certainly this definition also works for a line bundle.
\begin{definition}\label{d27}
Let $\delta$ be a $\mathbb{Q}$-divisor on $Y$.
\begin{enumerate}
\item The K\"{a}hler cone is the set $\mathcal{K}\subset H^{1,1}(Y,\mathbb{R})$ of cohomology classes $[\omega]$ of K\"{a}hler forms. $\delta$ is ample if $\delta\in\mathcal{K}$.
  \item $\delta$ is nef if $\delta\in\overline{\mathcal{K}}$. Here $\overline{\mathcal{K}}$ is the closure of $\mathcal{K}$.
  \item The pseudo-effective cone is the set $\mathcal{E}\subset H^{1,1}(Y,\mathbb{R})$ of cohomology classes [T] of
closed positive currents of type $(1,1)$. $\delta$ is pseudo-effective if $\delta\in\mathcal{E}$.
   \item  $\delta$ is big if $\delta\in\mathcal{E}^{o}$. Here $\mathcal{E}^{o}$ is the interior of $\mathcal{E}$.
\end{enumerate}
\end{definition}
The positivity of a $\mathbb{Q}$-twisted bundle is defined as follows:
\begin{definition}\label{d28}
Let $E<\delta>$ be a $\mathbb{Q}$-twisted bundle.
\begin{enumerate}
\item $E<\delta>$ is ample, if $\mathcal{O}_{E}(1)+\pi^{\ast}\delta$ is an ample $\mathbb{Q}$-divisor on $X$.
  \item $E<\delta>$ is nef, if $\mathcal{O}_{E}(1)+\pi^{\ast}\delta$ is a nef $\mathbb{Q}$-divisor on $X$.
  \item $E<\delta>$ is big, if $\mathcal{O}_{E}(1)+\pi^{\ast}\delta$ is a big $\mathbb{Q}$-divisor on $X$.
\end{enumerate}
\end{definition}

\section{The singular Hermitian metric}
\subsection{The definition}
Using the same notations as Sect.2.1, we have the following definition. We emphasise that throughout this paper, we only consider those locally $L^{1}$-bounded functions on $X$ whose singular part doesn't dominate $Y$.
\begin{definition}\label{d31}
Fix a smooth Hermtian metric $h_{0}$ on $\mathcal{O}_{E}(1)$ as the reference metric. For any $\varphi\in L^{1}_{\textrm{loc}}$, let
\[
Y_{\varphi}:=\{z\in Y;\varphi|_{X_{z}}\textrm{ is well-defined}\}.
\]
Then we define
\[
\begin{split}
\mathcal{H}(X):=&\{\varphi\in L^{1}_{\textrm{loc}}(X);\textrm{when }z\in Y_{\varphi}, \varphi|_{X_{z}}\textrm{ is smooth and}\\
 &(i\Theta_{\mathcal{O}_{X}(1),h_{0}}+i\partial\bar{\partial}\varphi)|_{X_{z}}\textrm{ is strictly positive}\}.
\end{split}
\]
\end{definition}
Notice that for any $\varphi\in\mathcal{H}(X)$ and $z\in Y_{\varphi}$,
\[
\omega_{\varphi,z}:=(i\Theta_{\mathcal{O}_{X}(1),h_{0}}+i\partial\bar{\partial}\varphi)|_{X_{z}}
\]
is actually a K\"{a}her metric on $X_{z}$. Based on this observation we define the singular Hermitian metric on $E$ as follows.
\begin{definition}=[Definition \ref{d11}]\label{d32}
The singular Hermitian metric $H$ on $E$ is a map with the form that
\[
H(z,Z)=\int_{X_{z}}|Z|^{2}_{h_{0}}e^{-\varphi}\frac{\omega^{r}_{\varphi,z}}{r!}
\]
where $\varphi\in\mathcal{H}(X)$. Here we use the fact that $\pi_{\ast}\mathcal{O}_{E}(1)\simeq E$.
\end{definition}
Note that when $\varphi$ is smooth, it is just the traditional $L^{2}$-metric induced by $\varphi$. When $\varphi$ is merely $L^{1}$-bounded, it is easy to see that $H(z,Z)$ is an a.e. finite (but may not be $L^{1}$-bounded) function on $Y$.

\subsection{Comparison with the former definition}
Recall that \cite{Cat98} and \cite{Rau15} provide two types of singular Hermitian metric. We will call them Cataldo's singular metric and Raufi's singular metric respectively. In \cite{Rau15}, there is a detailed comparison between these two definitions.

Now we should prove Theorem \ref{t1}, which says that our definition is compatible with Raufi's singular metric.
\begin{proof}[Proof of Theorem \ref{t1}]
(1) Keep the notations. It is enough to prove that any smooth Hermitian metric $H$ on $E$ can be rewritten as an $L^{2}$-metric. Indeed, take $\varphi$ to be the metric on $\mathcal{O}_{E}(1)$ corresponding to $H$, we claim that
\[
H(z,Z)=\int_{X_{z}}|Z|^{2}e^{-\varphi}\frac{\omega^{r}_{\varphi,z}}{r!}
\]
up to a constant, hence finish the proof of (1).

Now we prove the claim above. In fact, it is already done in \cite{LSY06} by comparing the curvature associated with both metrics. Here we reprove it by locally representing the $L^{2}$-metric. For a coordinate ball $U\subset Y$, we take the local trivialization of $E$ as
\[
    \begin{split}
        E|_{U}\simeq U\times\mathbb{C}^{r+1}&\rightarrow U\\
        (z,Z=(Z_{0},...,Z_{r}))&\mapsto z.
    \end{split}
\]
Accordingly, the local coordinate of the fibration
\[
    \pi:X\rightarrow Y
\]
is
\[
    \begin{split}
        X|_{f^{-1}(U)}\simeq U\times\mathbb{P}^{r}&\rightarrow U\\
        (z,[Z^{\ast}])&\mapsto z,
    \end{split}
\]
where $(Z^{\ast})$ means the dual coordinate. Let $W_{A}=\{Z^{\ast}_{A}\neq0\}$ with $0\leqslant A\leqslant r$, then we have
\[
    \begin{split}
        X|_{U\times W_{A}}&\rightarrow U\\
        (z,w_{A})&\mapsto z
    \end{split}
\]
with $w_{A}=(w^{0}_{A},...,1^{A},...,w^{r}_{A})=(\frac{Z^{\ast}_{0}}{Z^{\ast}_{A}},...,\frac{Z^{\ast}_{r}}{Z^{\ast}_{A}})$. Here $1^{A}$ means the $A$-th component equals $1$. In this setting the local trivialization of $\mathcal{O}_{E}(1)$ is
\[
    \mathcal{O}_{E}(1)|_{U\times W_{A}}\simeq U\times W_{A}\times\mathbb{C}
\]
with coordinate $(z,w_{A},\xi)$. Recall there exits an isomorphism
\[
\pi_{\ast}(\mathcal{O}_{E}(1))=E,
\]
hence a section of $E$ maps to a section of $\pi_{\ast}(\mathcal{O}_{E}(1))$. This map can be explicitly written down as follows: fix a holomorphic basis $\{u_{0},...,u_{r}\}$ of $E$, then an arbitrary section
\[
    \begin{split}
        u: U&\rightarrow E|_{U}\\
        z&\mapsto (z,(Z_{0}(z),...,Z_{r}(z))
    \end{split}
\]
maps to
\[
    \begin{split}
        \xi: U\times W_{A}&\rightarrow\mathcal{O}_{E}(1)|_{U\times W_{A}}\\
        (z,w_{A})&\mapsto(z,w_{A},\xi_{A})
    \end{split}
\]
with
\[
    \xi_{A}(Z_{0},...,Z_{r})=Z_{A}+\sum_{0\leqslant\alpha\leqslant r,\alpha\neq A}Z_{\alpha}w^{\alpha}_{A}.
\]

Now the $L^{2}$-metric is defined as
\begin{equation}\label{e31}
    G(z,Z):=\int_{X_{z}}|\xi_{A}|^{2}e^{-\varphi}\frac{\omega^{r}_{\varphi,z}}{r!}.
\end{equation}
In this context, if locally we expand $G(z,Z)=\sum G_{\alpha\bar{\beta}}Z_{\alpha}\bar{Z}_{\beta}$, $(G_{\alpha\bar{\beta}})$ will be a matrix with the form as follows:

1. $\alpha=\beta=A$, then
\[
    G_{\alpha\bar{\beta}}=\int_{X_{y}}e^{-\varphi(z,w_{A})}\frac{\omega^{r}_{\varphi,z}} {r!};
\]

2. $\alpha=A,\beta\neq A$, then
\[
    G_{\alpha\bar{\beta}}=\int_{X_{z}}\bar{w}^{\beta} _{A}e^{-\varphi(z,w_{A})}\frac{\omega^{r}_{\varphi,z}}{r!};
\]

3. $\alpha\neq A,\beta=A$, then
\[
    G_{\alpha\bar{\beta}}=\int_{X_{z}}w^{\alpha}_{A}e^ {-\varphi(z,w_{A})}\frac{\omega^{r}_{\varphi,z}}{r!};
\]

4. $\alpha\neq A, \beta\neq A$, then
\[
    G_{\alpha\bar{\beta}}= \int_{X_{z}}w^{\alpha}_{A}\bar{w}^{\beta}_{A}e^{-\varphi(z,w_{A})}\frac{\omega^{r}_{\varphi,z}}{r!}.
\]
Now we take the holomorphic basis $\{u_{0},...,u_{r}\}$ of $E$ such that it is an orthonormal basis with respect to $H$ (as well as $H^{\ast}$) at a fixed point $z_{0}\in Y$. In other words, $H(z_{0})$ and $H^{\ast}(z_{0})$ are identity matrices. Accordingly, $\varphi(z)$, which is induced by $H(z)$, is the standard Fubini--Study metric on $X_{z_{0}}=\mathbb{P}^{r}$. Then at this point
\[
\begin{split}
G(z_{0},Z)&=\sum G_{\alpha\bar{\beta}}Z_{\alpha}\bar{Z}_{\beta}\\
      &=\frac{\textrm{Vol}(\mathbb{P}^{r})}{|Z^{\ast}_{0}|^{2}+\cdots+|Z^{\ast}_{r}|  ^{2}}\\
      &=\textrm{Vol}(\mathbb{P}^{r})(|Z_{0}|^{2}+\cdots+|Z_{r}|^{2})\\
      &=\textrm{Vol}(\mathbb{P}^{r})H(z_{0},Z).
\end{split}
\]
The second equality is an elementary computation based on the local representative of $G_{\alpha\bar{\beta}}$ above. We eventually obtain that
\[
G(z,Z)=\textrm{Vol}(\mathbb{P}^{r})H(z,Z)
\]
for every $z\in Y$, and the proof is complete.

(2) When $E$ is a line bundle, $X=\mathbb{P}(E^{\ast})=Y$ and $\mathcal{O}_{E}(1)=E$. Hence $\mathcal{H}(X)=L^{1}_{\textrm{loc}}(Y)$. Let $\varphi\in L^{1}_{\textrm{loc}}(Y)$, then $H$ in Definition \ref{d32} degenerates as
\[
H(z,Z)=|Z|^{2}e^{-\varphi},
\]
which is exactly the singular metric on a line bundle.

(3) Recall that Raufi's singular metric in \cite{Rau15} refers to a measurable map $H$ from $Y$ to the space of non-negative Hermitian forms on the fibres of $E$. So if moreover $H\in M^{o}_{R}(E)$, $H(z)$ must be a positive-definite Hermitian matrix as long as it is finite at $z\in Y$. Let $\varphi$ be its corresponding (singular) metric on $\mathcal{O}_{E}(1)$. It is easy to verify that $\varphi\in\mathcal{H}(X)$. Now the computation in (1) shows that
\[
H=\int_{X_{z}}|Z|^{2}e^{-\varphi}\frac{\omega^{r}_{\varphi,z}}{r!}
\]
up to a constant, namely $H\in M^{o}_{W}(E)$. Notice this computation is done on each fibre $X_{z}$ with $z\in Y_{\varphi}$. In particular, it does not require that $H$ smoothly depends on $z$. In summary we have proved that $M^{o}_{R}(E)\subset M^{o}_{W}(E)$.

In order to prove $M^{o}_{W}(E)\subset M^{o}_{R}(E)$, it is enough to show that
\[
\int_{X_{z}}|Z|^{2}e^{-\varphi}\frac{\omega^{r}_{\varphi,z}}{r!}
\]
is a non-negative Hermitian form at each $z\in Y_{\varphi}$. It is obvious concerning the local representation in (1).
\end{proof}

In the rest part, by saying a singular Hermitian metric $H$ on $E$, we always refer to an $H\in M_{W}(E)$. Comparing with Raufi's singular metric, $H(z,\cdot)$ must be a positive-definite matrix when it is finite, whereas $H(z,Z)$ is not necessary to be measurable. There are two points showing the nature of this adjustment.

Remember that in \cite{Rau15}, there is an example (Theorem 1.5) showing that in general the associated curvature current cannot be defined for a Raufi's singular metric $h$. Indeed, \cite{Rau15} defines a singular metric $h$ on a trivial vector bundle $\Delta\times\mathbb{C}^{2}$ over the unit disc $\Delta$. The main reason that $\Theta_{h}:=\bar{\partial}(h^{-1}\partial h)$ doesn't have measure coefficients is that $h$ degenerates (as a matrix) at the origin. So it is reasonable to excluded this situation in our definition.

The second observation comes from the line bundle case. Recall that a singular metric on a line bundle $L$ \cite{Dem12} is a function on its total space with the form that $|\xi|^{2}e^{-\varphi}$, where $\xi$ is a section of $L$ and $\varphi$ is a measurable function on $Y$. There are various examples showing that $e^{-\varphi}$ is not need to be integrable, hence neither is $|\xi|^{2}e^{-\varphi}$. This observation inspires us to consider the possibility to treat the non-measurable map as a singular metric on a vector bundle $E$ of higher rank.

\section{The Finsler geometry in singular case}
This section is devoted to prove Theorem \ref{t2}. Note that in the following part, $Y$ is assumed to be compact. To begin with, we give the definition of Griffiths positivity for a singular Hermitian vector bundle $(E,H)$. Notice that $H$ naturally induces a metric $H^{\ast}$ on $E^{\ast}$, and hence a singular metric $\varphi$ on $\mathcal{O}_{E}(1)$. Keep notations in Sect.2.1, we have
\begin{definition}[=Definition \ref{d12}]\label{d41}
Let $\omega$ be a Hermitian metric on $X=\mathbb{P}(E^{\ast})$. We say $(E,H)$ is positive in the sense of Griffiths, if
\[
i\Theta_{\mathcal{O}_{E}(1),\varphi}\geqslant0
\]
in the sense of current. $(E,H)$ is strictly positive in the sense of Griffiths, if
\[
i\Theta_{\mathcal{O}_{E}(1),\varphi}\geqslant\omega.
\]
\end{definition}

Next we present Theorem \ref{t2}, which gives an equivalent description for a Griffiths positive vector bundle $(E,H)$. Recall that
\[
\mathcal{H}_{i}(X):=\{\varphi\in\mathcal{H}(X);\varphi\textrm{ is induced by an }H\in M_{W}(E)\}.
\]
\begin{theorem}[=Theorem \ref{t2}]
Consider the $\varphi\in\mathcal{H}(X)$ such that
\[
i\Theta_{\mathcal{O}_{E}(1),\varphi}\geqslant0.
\]
Let $H_{\varphi}$ be the singular Hermitian metric defined by $\varphi$. The following two statements are equivalent.
\begin{enumerate}
  \item[(a)] $\varphi\in\mathcal{H}_{i}(X)$. More precisely, $\varphi$ is induced by $H_{\varphi}$;
  \item[(b)] $\varphi$ induces an isometry between the canonical isomorphism
\[
  K_{X/Y}\simeq\mathcal{O}_{X}(-r-1)+\pi^{\ast}\det E.
\]
\end{enumerate}
\end{theorem}
The statement (a) infers that $(E,H_{\varphi})$ is positive in the sense of Griffiths.

Before proceeding to the proof, we should explain the isometry in this theorem first. Recall that for any $\varphi\in\mathcal{H}(X)$, $\omega_{\varphi,z}$ is a K\"{a}hler metric on $X_{z}$ if $z\in Y_{\varphi}$. Therefore it induces a smooth metric $\omega^{r}_{\varphi,z}$ on $K_{X_{z}}$. On the other hand, $H_{\varphi}$ induces a singular metric $\det H_{\varphi}$ on $\det E$. Now we say that $\varphi$ induces an isometry between the canonical isomorphism
\[
-K_{X/Y}\simeq\mathcal{O}_{E}(r+1)+\pi^{\ast}\det E^{\ast}
\]
if
\begin{equation}\label{e41}
\omega^{r}_{\varphi,z}=Ce^{-(r+1)\varphi}\cdot\pi^{\ast}\det H^{\ast}_{\varphi}dw\wedge d\bar{w}
\end{equation}
on $X_{z}$ when $z\in Y_{\varphi}$. In the following part, our discussion will always focus on $Y_{\varphi}$.

The constant $C$ is determined as follows.
\begin{lemma}\label{l41}
The normalization constant $C$ in (\ref{e41}) equals to $\frac{1}{((r+1)!)^{r}}$.
\begin{proof}
The proof is a continuation of the computation in the proof of Theorem \ref{t1}. Using the same notation there, if locally
\[
H_{\varphi}=\sum H_{\alpha\bar{\beta}}Z_{\alpha}\bar{Z}_{\beta},
\]
the determination $\det H_{\varphi}=\det H_{\alpha\bar{\beta}}Z^{\ast}\wedge\bar{Z}^{\ast}$ defines a metric on $\det E$. Here
\[
Z^{\ast}\wedge\bar{Z}^{\ast}:=Z^{\ast}_{1}\wedge\cdots\wedge Z^{\ast}_{r}\wedge\bar{Z}^{\ast}_{1}\wedge\cdots\wedge\bar{Z}^{\ast}_{r}.
\]
Then the equation (\ref{e41}) can be reformulated as
\begin{equation}\label{e42}
    (i\partial\bar{\partial}_{w_{A}}\varphi)^{r}=C\cdot C_{z}e^{-(r+1)\varphi(z,w_{A})}dw_{A}\wedge d\bar{w}_{A},
\end{equation}
with $C_{z}=\pi^{\ast}\det H^{-1}_{i\bar{j}}(z)$. It is now a standard K\"{a}hler--Einstein equation on $X_{z}=\mathbb{P}^{r}$. Thus the solution of the equation (\ref{e42}) must have the following form:
\[
    \varphi=\log(1+\sum_{A}|w_{A}|^{2})-\frac{1}{r+1}\log(C_{z}+C).
\]
Replacing $\varphi$ by $\log(1+\sum_{A}|w_{A}|^{2})-\frac{1}{r+1}\log(C_{z}+C)$ in the local expression of $H_{\alpha\bar{\beta}}$, we get
\[
    \begin{split}
        C_{z}&=\pi^{\ast}\det H^{-1}_{\alpha\bar{\beta}}(z)\\
             &=C_{z}\cdot C\det(\int_{\mathbb{P}^{r}}\frac{Z^{\ast}_{\alpha}Z^{\ast}_{\bar{\beta}}}{|Z^{\ast}|^{2}}\frac{\omega^{r}_{FS}}{r!})^{-1}\\
             &=((r+1)!)^{r}C_{z}\cdot C.
    \end{split}
\]
It means that $C=\frac{1}{((r+1)!)^{r}}$.
\end{proof}
\end{lemma}

Now we are ready to prove Theorem \ref{t2}.
\begin{proof}[Proof of Theorem \ref{t2}]
One direction is easy. If $\varphi$ is induced by a singular Hermitian metric, say $H$, on $E$, then we can take a holomorphic basis of $E$ such that $H$ is the identity matrix at the given point $z_{0}\in Y_{\varphi}$. Accordingly, $\varphi|_{X_{z_{0}}}$ is the Fubini--Study metric on $X_{z_{0}}=\mathbb{P}^{r}$, hence solves the equation (\ref{e42}). It is equivalent to say that $\varphi$ induces an isometry between
\[
K_{X/Y}\simeq\mathcal{O}_{E}(-r-1)+\pi^{\ast}\det E.
\]

Next we shall prove the opposite direction. Let $\psi$ be the metric on $\mathcal{O}_{E}(1)$ induced by $H_{\varphi}$. In order to prove $(b)\Rightarrow(a)$, we should verify that $\varphi=\psi$. Observe that $\varphi$ is a quasi-plurisubharmonic function on $X$, it is enough to prove that $i\partial\bar{\partial}\varphi=i\partial\bar{\partial}\psi$.  It is a local property, so we could assume that $\varphi$ is smooth without loss of generality by standard approximation technique. At this time, the curvature $\Theta^{H_{\varphi}}$ associated with $H_{\varphi}$ is well-defined. Using the same notations as in Sect.2, it is then left to prove that
\begin{equation}\label{e43}
   \sum i\Theta^{H_{\varphi}}_{\alpha\bar{\beta}i\bar{j}}w_{\alpha}\bar{w}_{\beta}dz_{i}\wedge d\bar{z}_{j}=-\Psi^{T}G
\end{equation}
up to a constant. Here $G$ is the Finsler metric on $E^{\ast}$ corresponding to $\varphi$, $\Psi$ is the Kobayashi curvature associated with $\varphi$, and $\Psi^{T}$ means to take the conjugate transpose with respect to indexes $\alpha,\beta$. In other words, if $\Psi=\sum i\textrm{K}^{\varphi}_{\alpha\bar{\beta}i\bar{j}}\frac{w_{\alpha}\bar{w}_{\beta}}{G}dz_{i}\wedge d\bar{z}_{j}$, then
\[
    \Psi^{T}=\sum i\textrm{K}^{\varphi}_{\beta\bar{\alpha}i\bar{j}}\frac{w_{\alpha}\bar{w}_{\beta}}{G}dz_{i}\wedge d\bar{z}_{j}.
\]
Indeed, if the formula (\ref{e43}) holds, we must have $i\partial\bar{\partial}\psi=i\partial\bar{\partial}\varphi$ by Proposition \ref{p23}.

The formula (\ref{e43}) is implicitly included in the main theorem of \cite{Nau17}, and we provide a simple explanation here for readers' convenience. It is divided into three parts.

1. By \cite{Ber09}, we know that for any section
\[
u^{1}=\sum Z^{1}_{\alpha}u_{\alpha}\textrm{ and }u^{2}=\sum Z^{2}_{\alpha}u_{\alpha}\in E
\]
and complex vector $v=(v_{1},...,v_{n})$, the curvature formula concerning the $L^{2}$-metric $H_{\varphi}$ is as follows:
\begin{equation}\label{e44}
\begin{split}
    \sum_{i,j}(i\Theta^{H_{\varphi}}_{i\bar{j}}u^{1},u^{2})_{H_{\varphi}}v_{i}\bar{v}_{j}=&\sum_{i,j}\int_{X_{z}}ic(\psi)_{i\bar{j}}\xi^{1}\bar{\xi}^{2} v_{i}\bar{v}_{j}e^{-\varphi}\frac{\omega^{r}_{\varphi,z}}{r!}\\
&+\int_{X_{z}}\eta^{1}\wedge\bar{\eta}^{2}dV_{z}.
\end{split}
\end{equation}
Here $\psi$ is given by $e^{-\psi}dw\wedge d\bar{w}=e^{-\varphi}\omega^{r}_{\varphi,z}$, and $\xi^{1},\xi^{2}$ are given by $u^{1},u^{2}$. More precisely, the datum $(Z^{1}_{\alpha})$ and $(Z^{2}_{\alpha})$ will uniquely determine the sections $\xi^{1},\xi^{2}$ of $\mathcal{O}_{E}(1)$ as is shown in the proof of Theorem \ref{t1}. On the other hand, $\eta^{1},\eta^{2}$ are given by
\[
    \eta^{1}=\frac{\bar{\partial}\xi^{1}\wedge\sum v_{i}\hat{dz_{i}}}{dz}
\]
and
\[
\eta^{2}=\frac{\bar{\partial}\xi^{2}\wedge\sum v_{i}\hat{dz_{i}}}{dz}
\]
respectively. Here $\hat{dz_{i}}$ denotes the wedge product of all $dz_{j}$ except $dz_{i}$. In other words,
\[
\eta^{i}=\kappa^{\varphi}_{z}\rfloor\xi^{i}\textrm{ with }i=1,2,
\]
where $\kappa^{\varphi}_{z}$ is the Kodaira--Spencer class.

2. When $\varphi$ induces the isometry (\ref{e11}), it is proved in \cite{Nau17} that (Proposition 1) $\kappa^{\varphi}_{z}=0$, and (Corollary 1)
\[
    \pi^{\ast}R^{\det E}_{H,i\bar{j}}=\int_{X_{z}}c(\varphi)_{i\bar{j}}\omega^{r}_{\varphi,z}.
\]
Strictly speaking, the context here is slightly different from \cite{Nau17}. However, one could see that $\varphi$ is not need to be strictly plurisubharmonic as in \cite{Nau17} after carefully checking the calculation there. Now $\eta^{1}=\eta^{2}=0$ since $\kappa^{\varphi}_{z}=0$. Moreover,  $\varphi$ is a K\"{a}hler--Einstein metric on $X_{z}=\mathbb{P}^{r}$, hence gives a polarization \cite{GrH78}. Combine with the formula (\ref{e23}), we obtain that
\[
\int_{X_{z}}c(\varphi)_{i\bar{j}}\omega^{r}_{\varphi,z}=-\int_{\mathbb{P}^{r}}\textrm{K}^{\varphi}_{\sigma\bar{\tau}i\bar {j}}\frac{w_{\sigma}\bar{w}_{\tau}}{|w|^{2}}\omega^{r}_{FS}.
\]
Then apply a clever eigenvalue argument presented in \cite{Nau17}, we can arrange the thing that $\textrm{K}^{\varphi}_{\sigma\bar{\tau}i\bar {j}}$ is independent of $w=(w_{\sigma})$, hence we eventually have
\[
\int_{X_{z}}c(\varphi)_{i\bar{j}}\omega^{r}_{\varphi,z}=-\textrm{K}^{\varphi}_{\sigma\bar{\tau}i\bar {j}}\int_{\mathbb{P}^{r}}\frac{w_{\sigma}\bar{w}_{\tau}}{|w|^{2}}\omega^{r}_{FS}.
\]

3. Now we are ready to calculate the curvature at a given point $z_{0}\in Y$. Take $u^{1}=u_{\alpha}$ and $u^{2}=u_{\beta}$ (hence $\xi^{1}=w^{\alpha},\xi^{2}=w^{\beta}$ at this time), and remember that we can always assume that $\{u_{\alpha}\}$ is orthonormal with respect to $H_{\varphi}$ at $z_{0}$, the left hand side of formula (\ref{e44}) becomes $i\Theta^{H_{\varphi}}_{\alpha\bar{\beta}i\bar{j}}v_{i}\bar{v}_{j}$. On the other hand, as $\varphi$ polarizes $X_{z}$, the right hand side of formula (\ref{e44}) actually equals to
\[
\begin{split}
    =&iv_{i}\bar{v}_{j}\int_{X_{z}}c(\psi)_{i\bar{j}}w_{\alpha}\bar{w}_{\beta}e^{-\varphi}\frac{\omega^{r}_{\varphi,z}}{r!}\\
    =&iv_{i}\bar{v}_{j}\int_{X_{z}}c((r+2)\varphi-\pi^{\ast}\log\det H)_{i\bar{j}}w_{\alpha}\bar{w}_{\beta}e^{-\varphi}\frac{\omega^{r}_{\varphi,z}}{r!}\\
    =&-r!iv_{i}\bar{v}_{j}\int_{\mathbb{P}^{r}}((r+2)\textrm{K}^{\varphi}_{\sigma\bar{\tau}i\bar{j}}\frac{w_{\sigma}\bar{w}_{\tau}}{|w|^{2}}+\pi^{\ast} R^{\det E}_{H,i\bar{j}})\frac{w_{\alpha}\bar{w}_{\beta}}{|w|^{2}}\frac{\omega^{r}_{FS}}{r!}\\
    =&-r!iv_{i}\bar{v}_{j}((r+2)\textrm{K}^{\varphi}_{\sigma\bar{\tau}i\bar{j}}\int_{\mathbb{P}^{r}}\frac{w_{\alpha}\bar{w}_{\beta}w_{\sigma}\bar{w}_ {\tau}}{|w|^{4}}\frac{\omega^{r}_{FS}}{r!}\\
    &-\delta_{\alpha\bar{\beta}}K^{\varphi}_{\sigma\bar{\tau}i\bar{j}}\int_{\mathbb{P}^{r}}\frac{w_{\sigma} \bar{w}_{\tau}}{|w|^{2}}\frac{\omega^{r}_{FS}}{r!})\\
    =&-r!iv_{i}\bar{v}_{j}\frac{\textrm{K}^{\varphi}_{\sigma\bar{\tau}i\bar{j}}}{(r+1)!}((\delta_{\alpha\bar{\beta}}\delta_{\sigma\bar{\tau}}+\delta_ {\alpha\bar{\tau}}\delta_{\sigma\bar{\beta}})-\delta_{\alpha\bar{\beta}}\delta_{\sigma\bar{\tau}})\\
    =&-iv_{i}\bar{v}_{j}\frac{\textrm{K}^{\varphi}_{\beta\bar{\alpha}i\bar{j}}}{r+1},
\end{split}
\]
By now, we have successfully proved the formula (\ref{e43}). As a result, $\varphi$ and $\psi$ are equal up to a constant. The proof is complete.
\end{proof}

Now let $\mathcal{H}_{h_{0}}(X)$ be the set of all the functions $\varphi\in\mathcal{H}(X)$ such that
\begin{enumerate}
  \item[(i)] $i\Theta_{\mathcal{O}_{E}(1),h_{0}}+i\partial\bar{\partial}\varphi\geqslant0$ in the sense of current;
  \item[(ii)] $\varphi$ induces an isometry between
\[
K_{X/Y}\simeq\mathcal{O}_{X}(-r-1)+\pi^{\ast}\det E.
\]
\end{enumerate}

Then we summarize the description for a Griffiths positive vector bundle as follows.
\begin{corollary}[=Corollary \ref{c11}]
Let $E$ be a vector bundle. Then there is a one-one correspondence between the set of the (singular) Hermitian metrics $H$ on $E$ such that $(E,H)$ is Griffiths positive in the sense of Definition \ref{d12} and $\mathcal{H}_{h_{0}}(X)$.
\begin{proof}
By Theorem \ref{t2}.
\end{proof}
\end{corollary}

\section{The positivity and the vanishing theorem}
\subsection{Setup}
Let $H$ be a (singular) Hermitian metric on $E$. We will  define the Griffiths and Nakano positivities in Definition \ref{d51}. After that, in order to distinguish them with the notions appear in \cite{Rau15}, we say that $(E,H)$ is positive in the sense of Griffiths if it meets the requirement in Definition \ref{d51}. We say that $(E,H)$ is Griffiths or Nakano positive in the sense of Raufi, if it satisfies the properties in Definition 1.2 and 1.8 of \cite{Rau15} accordingly.
\subsection{The Griffiths and Nakano positivities}
In this section, we discuss the positivity associated with the singular Hermitian metric. Keep notations as in Sect.2.1, we have
\begin{definition}\label{d51}
Let $H$ be a (singular) Hermitian metric on $E$, and let $\varphi$ be the corresponding metric on $\mathcal{O}_{E}(1)$. Let $q$ be a rational number. Then
\begin{enumerate}
  \item $(E,H)$ is (strictly) positive in the sense of Griffiths, if $i\partial\bar{\partial}\varphi$ is (strictly) positive on $X$.
  \item $(E<q\det E>,H)$ is (strictly) positive in the sense of Griffiths, if $i\partial\bar{\partial}\varphi+q\pi^{\ast}c_{1}(\det E,\det H)$ is (strictly) positive on $X$.
  \item $(E,H)$ is (strictly) strongly positive in the sense of Nakano, if the $\mathbb{Q}$-twisted vector bundle
\[
(E<-\frac{1}{r+2}\det E>,H)
\]
is (strictly) positive in the sense of Griffiths.
  \item $(E<q\det E>,H)$ is (strictly) strongly positive in the sense of Nakano, if the $\mathbb{Q}$-twisted vector bundle
\[
(E<\frac{2q-1}{r+2}\det E>,H_{\varphi})
\]
is (strictly) positive in the sense of Griffiths.
\end{enumerate}
\end{definition}

We will see in Theorem \ref{t3} that the Griffiths positivity defined here coincides with the one in Raufi's sense, while the strongly Nakano positivity is slightly stronger than the Nakano positivity in Raufi's sense. The advantage to discuss the strongly Nakano positivity is that it is not necessary to require the vector bundle to be positive in the sense of Griffiths a priori.

Now we shall prove Theorem \ref{t3}.
\begin{proof}[Proof of Theorem \ref{t3}]
(1) Since $H$ is a smooth Hermitian metric, the corresponding metric $\varphi$ on $\mathcal{O}_{E}(1)$ is also smooth. By Proposition \ref{p24}, $(E,H)$ is (strictly) positive in the sense of Griffiths if and only if $(\mathcal{O}_{E}(1),\varphi)$ is (ample) semi-positive. Then we finish the proof for the Griffiths positivity part.

Now assume that $(E,H)$ is (strictly) strongly Nakano positive, i.e.
\[
(E<-\frac{1}{r+2}\det E>,H)
\]
is (strictly) Griffiths positive as a $\mathbb{Q}$-twisted bundle. By definition, it is equivalent to say that $\mathcal{O}_{E}(r+2)-\pi^{\ast}\det E$ is an (ample) semi-positive line bundle on $X$. Then we apply the curvature formula (\ref{e44}) to this line bundle to obtain that
\[
\pi_{\ast}(K_{X/Y}+\mathcal{O}_{E}(r+2)-\pi^{\ast}\det E)=E
\]
is (strictly) positive in the sense of Nakano.

(2) When $(E,H)$ is positive in the sense of Griffiths, $\varphi=\log H^{\ast}$ is plurisubharmonic by definition. It exactly means that $(E^{\ast},H^{\ast})$ is Griffiths negative in the sense of Raufi. Equivalently $(E,H)$ is Griffiths positive in the sense of Raufi.

When $(E,H)$ is Griffiths positive in the sense of Raufi, locally there exists an increasing regularising sequence $\{H_{\nu}\}$ such that $(E,H_{\nu})$ is positive in the sense of Griffiths for every $\nu$ by Proposition 1.3 in \cite{Rau15}. Let $\varphi_{\nu}$ be the corresponding metric on $\mathcal{O}_{E}(1)$, then $\varphi_{\nu}$ is convergent to $\varphi$. Moreover, apply Proposition \ref{p23} to $H_{\nu}$, we get that
\[
i\partial\bar{\partial}\varphi_{\nu}=-\Psi_{\nu}+\omega_{FS,\nu}.
\]
Since $(E,H_{\nu})$ is positive in the sense of Griffiths, $\Psi_{\nu}$ is negative by Proposition \ref{p24}. It means that $\varphi_{\nu}$ is plurisubharmonic, so will be $\varphi$. Hence $(E,H)$ is positive in the sense of Griffiths.

(3) Assume that $(E,H)$ is (strictly) positive in the sense of Griffiths. By (2) locally there exists an increasing regularising sequence $\{H_{\nu}\}$ such that $(E,H_{\nu})$ is also positive in the sense of Griffiths. Since the desired conclusion is local, we may without loss of generality assume that $H$ is smooth itself. Now we applied the formula
\begin{equation}\label{e51}
\pi^{\ast}R^{\det E}_{\det H,i\bar{j}}=\int_{X_{z}}c(\varphi)_{i\bar{j}}\omega^{r}_{\varphi}
\end{equation}
appeared before. As a consequence, $R^{\det E}_{\det H,i\bar{j}}$ is (strictly) positive on $Y$ provided that $i\partial\bar{\partial}\varphi(>)\geqslant0$ on $X$. It exactly means that $(\det E,\det H)$ is (big) pseudo-effective.

(4) Let $\varphi$ be the corresponding metric on $\mathcal{O}_{E}(1)$. By definition,
\[
i\partial\bar{\partial}\varphi-\frac{1}{r+2}\pi^{\ast}c_{1}(\det E,\det H)
\]
is positive. Equivalently,
\[
i(r+2)\partial\bar{\partial}\varphi-\pi^{\ast}c_{1}(\det E,\det H)
\]
is positive. The aim is to prove that $c_{1}(\det E,\det H)$ is positive, hence $i\partial\bar{\partial}\varphi$ is also positive. But it is not a straightforward application of (3). The proof is more involved.

Indeed, $H$ induces a metric $S^{r+2}H$ on the $(r+2)$-th symmetric product $S^{r+2}E$, hence a metric $\psi$ on $\mathcal{O}_{S^{r+2}E}(1)$. Now let
\[
i:\mathbb{P}(E^{\ast})\hookrightarrow\mathbb{P}(S^{r+2}E^{\ast})
\]
be the Vernoese embedding \cite{GrH78}. Then we have
\[
i^{\ast}c_{1}(\mathcal{O}_{S^{r+2}E}(1),\psi)=c_{1}(\mathcal{O}_{E}(r+2),(r+2)\varphi).
\]
As a result, $i\partial\bar{\partial}_{h}\psi$ has the same
positivity as $i(r+2)\partial\bar{\partial}_{h}\varphi$, where $\partial\bar{\partial}_{h}$ means to take the derivative in the horizontal direction. Observe that $\psi$ is induced by a Hermitian metric on $S^{r+2}E$, $i\partial\bar{\partial}_{v}\psi$ must be positive, where $\partial\bar{\partial}_{v}$ means to take the derivative in the vertical direction. In summary, let $\Pi:\mathbb{P}(S^{r+2}E^{\ast})\rightarrow Y$ be the projection, we have
\[
i\partial\bar{\partial}\psi-\Pi^{\ast}c_{1}(\det E,\det H)\geqslant0.
\]
By definition we conclude that $(S^{r+2}E\otimes\det E^{\ast},S^{r+2}H\otimes\det H^{\ast})$ is positive in the sense of Griffiths. Using the same regularising technique as in (3), we may assume that $S^{r+2}H\otimes\det H^{\ast}$ is smooth without loss of generality. In particular, if we denote the weight function of $\Pi^{\ast}\det H^{\ast}$ by $\chi$, the function $\psi+\chi$ is also smooth.

Take the integral of
\[
i\partial\bar{\partial}\psi-\Pi^{\ast}c_{1}(\det E,\det H)
\]
along the fibre of $\Pi$ against the volume form induced by $i\partial\bar{\partial}_{v}\psi$, we then obtain a positive $(1,1)$-form $\Theta$ on $Y$. Apply the formula (\ref{e51}) on the vector bundle $S^{r+2}E\otimes\det E^{\ast}$, where the right hand side is exactly $\Theta$, the left hand side equals to
\[
c_{1}(\det S^{r+2}E,\det S^{r+2}H)-\textrm{R}\cdot c_{1}(\det E,\det H)=\frac{R}{r}c_{1}(\det E,\det H).
\]
This equality is due an elementary computation. Here $\textrm{R}=\binom{2r+2}{r+2}$ is the rank of $S^{r+2}E$. Therefore $c_{1}(\det E,\det H)$ is actually positive. Combine with the positivity of
\[
i\partial\bar{\partial}\varphi-\frac{1}{r+2}\pi^{\ast}c_{1}(\det E,\det H),
\]
we know that $i\partial\bar{\partial}\varphi$ is positive itself. Thus, $(E,H)$ is positive in the sense of Griffiths by definition.

(5) Notice that when $q=\frac{1}{2}$,
\[
\frac{2q-1}{r+1}=0.
\]
The conclusion is by definition.

(6) Using the same notations and regularization procedure as in (4), we may assume that $S^{r+2}H\otimes\det H^{\ast}$ is smooth, and
\[
(S^{r+2}E\otimes\det E^{\ast},S^{r+2}H\otimes\det H^{\ast})\]
is positive in the sense of Griffiths. Let $\Phi$ be the corresponding metric on $\mathcal{O}_{S^{r+2}E}(1)-\Pi^{\ast}\det E$, we obtain that
\[
(\mathcal{O}_{E}(r+2)-\pi^{\ast}\det E,i^{\ast}\Phi)
\]
is positive. Recall that $i:\mathbb{P}(E^{\ast})\hookrightarrow\mathbb{P}(S^{r+2}E^{\ast})$ is the Vernoese embedding, and $\Pi:\mathbb{P}(S^{r+2}E^{\ast})\rightarrow Y$, $\pi:X=\mathbb{P}(E^{\ast})\rightarrow Y$ are the natural projections. Now using the main theorem in \cite{Ber09}, we obtain that
\[
\pi_{\ast}(K_{X/Y}+\mathcal{O}_{E}(r+2)-\pi^{\ast}\det E)=E,
\]
equipped with the $L^{2}$-metric defined by $i^{\ast}\Phi$, is positive in the sense of Nakano. Obviously this metric is nothing but $H$.
\end{proof}

From now on, there is no need to differ the Griffiths positivity in our and Raufi's sense. However, we are willing to know that
\begin{problem}
Are the Nakano positivity and strongly Nakano positivity equivalent or not?
\end{problem}
If the answer is positive, we have successfully defined the Nakano positivity in the most general case.

\subsection{The vanishing theorem}
Now the generalised Griffiths' vanishing theorem is rather intuitive due to our discussions above.
\begin{proof}[Proof of Theorem \ref{t4}]
(1) Let $\varphi$ be the corresponding metric on $\mathcal{O}_{E}(1)$. Since $(E,H)$ is strictly positive in the sense of Griffiths, $(\mathcal{O}_{E}(1),\varphi)$ is big by definition. Thus, $\mathcal{O}_{E}(r+k+1)\otimes\pi^{\ast}L$ is also big for any $k$. Apply Nadel's vanishing theorem \cite{Nad90} to $\mathcal{O}_{E}(r+k+1)\otimes\pi^{\ast}L$, and remember that $\mathscr{I}((r+k+1)\varphi)=\mathcal{O}_{X}$ by assumption, we have
\[
H^{q}(X,K_{X}\otimes\mathcal{O}_{E}(r+k+1)\otimes\pi^{\ast}L)=0\textrm{ for all }q>0.
\]
On the other hand, observe that $\mathcal{O}_{E}(1)$ is ample along each fibre, using Kodaira's vanishing theorem  we obtain that
\[
R^{q}\pi_{\ast}(K_{X}\otimes\mathcal{O}_{E}(r+k+1)\otimes\pi^{\ast}L)=0\textrm{ for all }q>0.
\]
Now apply the Larey spectral sequence \cite{GrH78} we obtain that
\[
\begin{split}
0=&H^{q}(X,K_{X}\otimes\mathcal{O}_{E}(r+k+1)\otimes\pi^{\ast}L)\\
=&H^{q}(Y,\pi_{\ast}(K_{X}\otimes\mathcal{O}_{E}(r+k+1))\otimes L).
\end{split}
\]
Combining with the fact that
\[
\pi_{\ast}(K_{X}\otimes\mathcal{O}_{E}(r+k+1))=K_{Y}\otimes S^{k}E\otimes\det E,
\]
we immediately get the desired result.

(2) Let $\varphi$ be the corresponding metric on $\mathcal{O}_{E}(1)$, and let $\psi$ be the weight function of $h$. Since $(E,H)$ is positive in the sense of Griffiths, $(\mathcal{O}_{E}(1),\varphi)$ is pseudo-effective. In particular, $i\partial\bar{\partial}\varphi$ is strictly positive in the vertical direction. On the other hand, $(L,\psi)$ is big. Hence $(\pi^{\ast}L,\pi^{\ast}\psi)$ is pseudo-effective. In particular, $i\partial\bar{\partial}\pi^{\ast}\psi$ is strictly positive in the horizontal direction. As a result,
\[
i\partial\bar{\partial}((r+k+1)\varphi+\pi^{\ast}\psi)
\]
is strictly positive in every direction for any $k$. Therefore
\[
(\mathcal{O}_{E}(r+k+1)\otimes\pi^{\ast}L,(r+k+1)\varphi+\pi^{\ast}\psi)
\]
is big. We claim that $\mathscr{I}((r+k+1)\varphi+\pi^{\ast}\psi)=\mathscr{I}(\pi^{\ast}\psi)$.

One direction is obvious. Now for any point $x\in X$, we take a local coordinate ball $U$. If $f\in\mathscr{I}(\pi^{\ast}\psi)_{x}$, we have
\[
\begin{split}
\int_{U}|f|^{2}e^{-((r+k+1)\varphi+\pi^{\ast}\psi)}&\leqslant(\int_{U}(|f|^{2}e^{-\pi^{\ast}\psi})^{1+\varepsilon})^{\frac{1}{1+\varepsilon}} (\int_{U}e^{-\frac{(1+\varepsilon)(r+k+1)}{\varepsilon}\varphi})^{\frac{\varepsilon}{1+\varepsilon}}\\
&\leqslant C(\int_{U}(|f|^{2}e^{-\pi^{\ast}\psi})^{1+\varepsilon})^{\frac{1}{1+\varepsilon}}.
\end{split}
\]
The first inequality is due to H\"{o}lder's inequality and the second inequality comes from the fact that $\nu(\varphi)=0$. Notice that
\[
\int_{U}(|f|^{2}e^{-\pi^{\ast}\psi})^{1+\varepsilon}
\]
is finite for $\varepsilon$ small enough by openness property \cite{GuZ15}. It leads to the claim.

Now using the same argument as (1), we have
\[
H^{q}(X,K_{X}\otimes\mathcal{O}_{E}(r+k+1)\otimes\pi^{\ast}L\otimes\mathscr{I}(\pi^{\ast}\psi))=0\textrm{ for all }q>0
\]
and
\[
R^{q}\pi_{\ast}(K_{X}\otimes\mathcal{O}_{E}(r+k+1)\otimes\pi^{\ast}L\otimes\mathscr{I}(\pi^{\ast}\psi))=0\textrm{ for all }q>0.
\]
Then we actually have
\[
\begin{split}
0=&H^{q}(X,K_{X}\otimes\mathcal{O}_{E}(r+k+1)\otimes\pi^{\ast}L\otimes\mathscr{I}(\pi^{\ast}\psi))\\
=&H^{q}(Y,\pi_{\ast}(K_{X}\otimes\mathcal{O}_{E}(r+k+1))\otimes L\otimes\mathscr{I}(\psi))\\
=&H^{q}(Y,K_{Y}\otimes S^{k}E\otimes\det E\otimes L\otimes\mathscr{I}(\psi))
\end{split}
\]
for all $q>0$. Here we use the subadditivity \cite{DEL00} that
\[
\mathscr{I}(\pi^{\ast}\psi)=\pi^{\ast}\mathscr{I}(\psi)
\]
to get the second equality.

(3) Let $\psi$ be the weight function of $\det H$. Since $(E,H)$ is strictly strongly Nakano positive,
\[
(\mathcal{O}_{E}(r+2)\otimes\pi^{\ast}\det E^{\ast},(r+2)\varphi-\pi^{\ast}\psi)
\]
is big by definition. On the other hand, $(E,H)$ is strictly positive in the sense of Griffiths by (4) of Theorem \ref{t3}, $(\mathcal{O}_{E}(1),\varphi)$ is also big. Hence for every $k\geqslant1$
\[
\mathcal{O}_{E}(r+k+1)\otimes\pi^{\ast}\det E^{\ast}\otimes\pi^{\ast}L
\]
is big. Apply the same argument as before, we obtain that
\[
H^{q}(X,K_{X}\otimes\mathcal{O}_{E}(r+k+1)\otimes\pi^{\ast}\det E^{\ast}\otimes\pi^{\ast}L\otimes\mathscr{I}(\pi^{\ast}\psi))=0\textrm{ for all }q>0
\]
and
\[
R^{q}\pi_{\ast}(K_{X}\otimes\mathcal{O}_{E}(r+k+1)\otimes\pi^{\ast}\det E^{\ast}\otimes\pi^{\ast}L\otimes\mathscr{I}(\pi^{\ast}\psi))=0\textrm{ for all }q>0.
\]
Then we have
\[
\begin{split}
0=&H^{q}(X,K_{X}\otimes\mathcal{O}_{E}(r+k+1)\otimes\pi^{\ast}\det E^{\ast}\otimes\pi^{\ast}L\otimes\mathscr{I}(\pi^{\ast}\psi))\\
=&H^{q}(Y,\pi_{\ast}(K_{X}\otimes\mathcal{O}_{E}(r+k+1))\otimes\det E^{\ast}\otimes L\otimes\mathscr{I}(\psi))\\
=&H^{q}(Y,K_{Y}\otimes S^{k}E\otimes L\otimes\mathscr{I}(\psi))
\end{split}
\]
for all $q>0$.

(4) is similar with (2), and we omit the details here.
\end{proof}
In \cite{Ina20b}, there is also a generalised Griffiths' vanishing theorem (Corollary 1.4). Currently, the relationship between their work and Theorem \ref{t4} is not quite clearly to me. We present here the following example to show the partial inclusion. This example has been presented in \cite{Wu19} before.

\begin{example}[and Proposition 5.1]\label{ex51}
Let $E$ be a stable vector bundle of rank $r+1$ over a smooth projective curve $Y$ of genus $\geqslant2$. Let $k$ be a positive integer. Assume that $(\det E,\phi)$ is big with $\nu(\phi)<\frac{r+1}{k}$. Then
\[
H^{q}(Y,K_{Y}\otimes S^{k}E\otimes\det E)=0\textrm{ for all }q>0.
\]
\end{example}
One refers to \cite{Laz04} for the definition of a stable vector bundle. Notice that in this situation if we directly apply Corollary 1.4 in \cite{Ina20b}, we obtain that
\[
H^{q}(Y,K_{Y}\otimes E\otimes\det E)=0\textrm{ for all }q>0
\]
under the assumption that $\nu(\phi)<1$. Obviously $1<r+1$, hence it is more restricted than Proposition 5.1. We remark that the condition $\nu(\phi)<\frac{r+1}{k}$ in our result is not optimal yet.
\begin{proof}[Proof of Proposition 5.1]
Since $E$ is stable, it is proved in \cite{NaS65} that there exists a coordinate chart $\{V_{i}\}$ of $Y$ such that the transition matrices of $E$ can be written in the form $g_{ij}=f_{ij}U_{ij}$, where $f_{ij}$ is a scalar function and $U_{ij}$ is a unitary matrix on $V_{i}\cap V_{j}$. In this situation, there is a one-one correspondence between the singular Hermitian metrics on $E$ and $\det E$.

Let $h_{i}$ be the local representative on $V_{i}$ of the metric defined by $\phi$. Then $\{h^{1/(r+1)}_{i}I_{r+1}\}$ defines a singular Hermitian metric $H$ on $E$. Here $I_{r+1}$ is the identity matrix of rank $r+1$. The associated curvature of $H$ is then given by
\[
\frac{i}{r+1}h^{1/(r+1)}_{i}\partial\bar{\partial}\phi\cdot I_{r+1},
\]
which is strictly positive in the sense of Griffiths. Let $\varphi$ be the metric on $\mathcal{O}_{E}(1)$ induced by $H$. Then $(\mathcal{O}_{E}(1),\varphi)$ is big. Moreover, recall that for any positive integer $k$, if $R=\binom{r+k+1}{k}$ is the rank of $S^{k}E$, we have $\det S^{k}E=\frac{kR}{r+1}\det E$. Thus, $\{h_{i}\}$ defines a metric $\{h^{kR/(r+1)}_{i}\}$ on $\det S^{k}E$ and $\{h^{k/(r+1)}_{i}I_{r+1}\}$ on $S^{k}E$. It is easy to verify that the later one coincides with $S^{k}H$.

Since $(E,H)$ is strictly positive in the sense of Griffiths,
\[
H^{q}(Y,K_{Y}\otimes S^{k}E\otimes\det E)=0\textrm{ for }q>0
\]
once we have $\mathscr{I}((r+k+1)\varphi)=\mathcal{O}_{X}$.
So it is left to prove that $\mathscr{I}((r+k+1)\varphi)=\mathcal{O}_{X}$.

Remember that in the proof of Theorem \ref{t3}, we introduce the Vernoese embedding $i:\mathbb{P}(E^{\ast})\hookrightarrow\mathbb{P}(S^{k}E^{\ast})$. Let $\psi$ be the metric on $\mathcal{O}_{S^{k}E}(1)$ corresponds to $S^{k}H$. In this context, we have
\[
k\varphi=i^{\ast}\psi.
\]
On the other hand, let
\[
\begin{split}
\Pi:\mathcal{X}=\mathbb{P}(S^{k}E^{\ast})&\rightarrow Y\\
(z,W)&\mapsto z
\end{split}
\]
be the projection, let $\Xi$ be an arbitrary section of $\Pi_{\ast}\mathcal{O}_{S^{k}E}(1)$ and let $\{U_{\alpha}\}$ be the corresponding section of $S^{k}E$ under the canonical isomorphism:
\[
\Pi_{\ast}\mathcal{O}_{S^{k}E}(1)\simeq S^{k}E.
\]
Under the same computation as Theorem \ref{t1}, we have
\begin{equation}\label{e52}
    \int_{\mathcal{X}_{z}}|\Xi|^{2}e^{-\psi}\frac{\omega^{R-1}_{\psi,z}}{(R-1)!}=h^{\frac{k}{r+1}}_{i}\sum|U_{\alpha}|^{2}
\end{equation}
up to a constant.

Since by assumption we have $\nu(h_{i})<\frac{r+1}{k}$, $h^{\frac{k}{r+1}}_{i}$ is locally integrable on $Y$ by \cite{Sko72}. Therefore the function
\[
\int_{\mathcal{X}_{z}}|\Xi|^{2}e^{-\psi}\frac{\omega^{R-1}_{\psi,z}}{(R-1)!}
\]
is also locally integrable by formula (\ref{e52}). Observe that $\psi$ is induced by a Hermitian metric $S^{k}H$ on $S^{k}E$ hence solves the equation (\ref{e41}) on $\mathcal{X}$. If we locally expand
\[
e^{-\psi}\omega^{R-1}_{\psi,z}=e^{-(R+1)\psi}\Pi^{\ast}\det S^{k}HdW\wedge d\bar{W},
\]
the function
\[
e^{-(R+1)\psi}\Pi^{\ast}\det S^{k}H
\]
is locally integrable on $\mathcal{X}$ by Fubini's theorem.
This integrability keeps after pull-back through $i$. Let
\[
\begin{split}
\pi:X=\mathbb{P}(E^{\ast})&\rightarrow Y\\
(z,w)&\mapsto z
\end{split}
\]
be the projection. Thus
\[
\begin{split}
i^{\ast}(e^{-(R+1)\psi}\Pi^{\ast}\det S^{k}H)=e^{-(R+1)k\varphi}\pi^{\ast}\det S^{k}H
\end{split}
\]
is locally integrable on $X$. Since
\[
\begin{split}
(R+1)k&=(\binom{r+k+1}{k}+1)k\\
&\geqslant r+k+1,
\end{split}
\]
it implies the local integrability of $e^{-(r+k+1)\varphi}$ on $X$. Therefore
\[
\mathscr{I}((r+k+1)\varphi)=\mathcal{O}_{X}.
\]
The proof is complete.
\end{proof}

\address{

\small Current address: School of Mathematical Sciences, Fudan University, Shanghai 200433, People's Republic of China.

\small E-mail address: jingcaowu13@fudan.edu.cn
}

\end{document}